\newtheorem{theorem}{Theorem}
\newtheorem{claim}{Claim}
\newtheorem*{lemmax}{Lemma}
\newtheorem*{counterexample}{Counterexample to ``Theorem \ref{D}''}
\newtheorem{pretheorema}{{\bf Theorem}}
\newtheorem{prelem}{{\bf Theorem}}
\newenvironment{lem}{\begin{prelem}{\hspace{-0.5
               mm}{\bf}}}{\end{prelem}}
\newtheorem{precor}{{\bf Corollary}}
\newtheorem{prelemlem}{{\bf Lemma}}
\begin{document}
\begin{frontmatter}
\title{On the relation between connectivity, independence and generalized caterpillars}
\author{M. Pedramfar}
\ead{pedramfar@dena.sharif.edu}
\author{M. Shokrian\corref{cor1}}
\ead{mshokrian@dena.sharif.edu}
\author{M. Tefagh\corref{cor2}}
\ead{mtefagh@dena.sharif.edu}
\cortext[cor1]{Corresponding author}
\address{Department of Mathematical Sciences, Sharif University of Technology, P.O. Box 11155-9415, \\ Tehran, I.R. IRAN}

\hyphenation{Mathematical}
\hyphenation{Hypercubes}
\hyphenation{independence}
\hyphenation{code}
\hyphenation{Furthermore}
\hyphenation{diameter}
\hyphenation{homomorphism}
\hyphenation{isomorphism}
\hyphenation{either}
\hyphenation{delsarte}

\begin{abstract}
A spanning generalized caterpillar is a spanning tree in which all vertices of degree more than two are on a path. In this note, we find a relation between the existence of spanning generalized caterpillar and the independence and connectivity number in a graph. We also point out to an error in a ``theorem'' in the paper ``Spanning spiders and light-splitting switches'', by L. Gargano et al.  in Discrete Math. (2004), and find out a relation between another mentioned theorem and the existence of spanning generalized caterpillar.
\end{abstract}
  
\begin{keyword}
Spanning spider; Caterpillar; Spanning generalized caterpillar; Independence
\end{keyword}
\end{frontmatter} 
Here, we consider only finite connected graphs without loops or multiple edges. For standard graph-theoretic terminologies not explained in this note, we refer to \cite{MR1367739}. 

We denote the $\it{degree}$ of a vertex $x$ in a graph $G$ by $d_G(x)$, the $\it{independence \ number}$ by $\alpha(G)$ and its $\it{connectivity \ number}$ by $\kappa(G)$. In a tree $T$, we call a vertex $v \in V(T)$ a $\it{branch \ vertex}$ when $d_T(v) > 2$.

As defined in \cite{Shrestha:2011:BCB:2033094.2033122}, a tree is called a $\it{generalized \  caterpillar}$ if all vertices of degree more than two are on a path or equivalently a caterpillar in which $\textit{hairs}$ edges incident to the spine, are replaced by paths. 

A motivation to define such a tree comes from the definition of $\it{spider}$, i.e. a star in which hairs are replaced by paths. Obviously a generalized caterpillar is also a generalization of spider.

In earlier results, it is shown that:
\begin{lem}
\emph{(\cite{Monien:1986:BMP:12903.12904})} The bandwidth problem is NP-complete for generalized caterpillars of hair length at most $3$.
\end{lem}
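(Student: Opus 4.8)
Since this statement is quoted from \cite{Monien:1986:BMP:12903.12904}, I only outline the argument I would give. Recall that the \emph{bandwidth} $b(G)$ of an $n$-vertex graph $G$ is $\min_f\max_{uv\in E(G)}|f(u)-f(v)|$ over all bijections $f\colon V(G)\to\{1,\dots,n\}$. Membership in NP is immediate: such an $f$ is a certificate of size $O(n\log n)$ and $\max_{uv}|f(u)-f(v)|$ can be computed in linear time, so I would concentrate on NP-hardness. The plan is a polynomial-time reduction from \textsc{3-Partition}, which is \emph{strongly} NP-complete: given $3m$ positive integers $a_1,\dots,a_{3m}$ and a bound $B$ with $B/4<a_i<B/2$ for all $i$ and $\sum_i a_i=mB$, decide whether $\{1,\dots,3m\}$ can be partitioned into $m$ triples of $a$-sum exactly $B$. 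Strong NP-completeness lets me assume $B$ is bounded by a polynomial in the input size, so the caterpillar constructed below will have polynomial size.

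\textbf{The construction.} From such an instance I would build a generalized caterpillar $T$ with all hairs of length at most $3$ together with a target value $k$, so that $b(T)\le k$ iff the instance is a yes-instance. The spine carries $m+1$ equally spaced \emph{wall} vertices $w_0,\dots,w_m$, each with a large bundle of pendant edges; the bundle size is picked so that in any layout of width $\le k$ a wall vertex together with its leaves must fill a full run of $k$ consecutive labels, which pins the walls, in order, near the labels $0,k,2k,\dots$ and thereby cuts $\{1,\dots,n\}$ into $m$ consecutive \emph{chambers}, each with exactly $B$ free labels. For each index $i$ I attach, at a spine vertex strictly between two consecutive walls, a \emph{number gadget} $N_i$: a short sub-caterpillar carrying $\Theta(a_i)$ hairs of length $2$ or $3$, tuned so that in any width-$\le k$ layout the image of $N_i$ is forced to be a block of exactly $a_i$ labels and cannot straddle a wall. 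Hair length $3$ is what makes this possible: hairs of length $1$ or $2$ are too floppy to pin a block of prescribed size, whereas bundling length-$2$/-$3$ hairs at a single spine vertex produces a local congestion of exactly the needed magnitude.

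\textbf{Correspondence and the main obstacle.} It then remains to prove the two directions. \emph{Completeness}: from a valid triple partition, assign the three gadgets of the $j$-th triple to the $j$-th chamber; since their $a$-sum is $B$ and the chamber has $B$ free labels they tile it exactly, and packing the remaining hairs greedily yields a layout of width $\le k$. \emph{Soundness}: from a layout of width $\le k$, read off the chamber decomposition from the forced wall positions; a local-density estimate---each window of $k$ consecutive labels can absorb only boundedly many hairs---shows that no number gadget leaks across a wall and each occupies exactly $a_i$ labels, so the gadgets landing in chamber $j$ have $a$-sum exactly $B$, and by $B/4<a_i<B/2$ this forces them to be a triple; these triples solve \textsc{3-Partition}. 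The step I expect to be hardest is precisely the one dictated by the hypothesis ``hair length at most $3$'': since caterpillars whose hairs have length at most $2$ have polynomial-time computable bandwidth, all the rigidity of the walls and number gadgets---in particular the no-leaking and exact-size claims---must be squeezed out of bundles of length-$\le 3$ hairs via tight local-density counting rather than from any long, globally constraining hair.
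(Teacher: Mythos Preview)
The paper does not give its own proof of this statement: it is quoted verbatim as a background result from \cite{Monien:1986:BMP:12903.12904} and is used only as motivation, with no argument supplied. There is therefore nothing to compare your proposal against in this paper; your outline of a reduction from \textsc{3-Partition} is in the spirit of Monien's original argument, and your caveat that the result is cited rather than proved here is exactly right.
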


But we are interested in  the conditions under which a graph has a spanning generalized caterpillar (or for simplicity an $\textbf{SGC}$). In order to find some sufficient conditions, we concentrated on the special case of spiders. In this way we looked at the paper \cite{MR2074842}, where it is discussed about the different aspects of the existence of spanning spiders in a graph:
\begin{lem}
\emph{(Proposition 1 of \cite{MR2074842})} It is NP-complete to decide whether a given graph $G$ admits a spanning spider.
\end{lem}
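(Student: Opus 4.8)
The plan is to show that the problem lies in NP and is NP-hard, the latter via a polynomial-time reduction from the classical NP-complete \emph{Hamiltonian path} problem. Membership in NP is immediate: a certificate is a spanning subgraph $T$ of $G$, and one verifies in polynomial time that $T$ is a tree (connected, with $|V(G)|-1$ edges) and that at most one vertex $v$ satisfies $d_T(v)>2$; this is exactly the statement that $T$ is a spanning spider.

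For the hardness I would proceed as follows. Let $G$ be a connected graph with $V(G)=\{v_1,\dots,v_n\}$ (a disconnected graph has no Hamiltonian path, so this loses nothing). Build a graph $H$ from a copy of $G$ by adding a new vertex $c$ joined to every $v_i$, a new vertex $d$ joined only to $c$, and two new pendant vertices $\ell_1,\ell_2$ attached to $d$. This is clearly computable in polynomial time, and the claim is that $H$ has a spanning spider if and only if $G$ has a Hamiltonian path.

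The forward direction is easy: given a Hamiltonian path $u_1u_2\cdots u_n$ of $G$, the set $T:=\{u_iu_{i+1}:1\le i<n\}\cup\{cu_1,\ cd,\ d\ell_1,\ d\ell_2\}$ is a spanning tree of $H$ in which $d$ has degree $3$ and every other vertex has degree at most $2$; hence $d$ is the unique branch vertex and $T$ is a spanning spider.

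The converse is the core of the proof and the step I expect to need the most care. Suppose $T$ is a spanning spider of $H$. Since $\ell_1,\ell_2$ are adjacent only to $d$, the edges $d\ell_1,d\ell_2$ lie in $T$; and since $d$'s only remaining neighbour is $c$, connectivity of $T$ forces $cd\in T$, so $d_T(d)=3$ and $d$ is a branch vertex. As $T$ is a spider, $d$ must be its only branch vertex, so $c$ and every $v_i$ have $T$-degree at most $2$. The only edges of $H$ between $V(G)$ and $\{c,d,\ell_1,\ell_2\}$ are the edges $cv_i$; since $cd\in T$ already contributes to $d_T(c)\le 2$, exactly one edge $cv_j$ lies in $T$, and deleting it disconnects $V(G)$ from the rest. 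Hence $T$ restricted to $V(G)$ is a spanning tree of $G$ in which every vertex has degree at most $2$, i.e.\ a Hamiltonian path of $G$. Together with membership in NP this yields the claimed NP-completeness; I do not expect any routine obstacle beyond keeping this forcing argument airtight, the key point being that $d$ exists precisely to ``absorb'' the single permitted branch vertex, which is what pins $c$ down to a single edge into $G$ and collapses the $G$-part to a path.
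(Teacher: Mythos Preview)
The paper does not actually supply a proof of this statement: it is quoted verbatim as Proposition~1 of \cite{MR2074842} and used only as motivation, so there is no in-paper argument to compare against. Your proposal is therefore being judged on its own merits.

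On those merits, your reduction is correct and standard in spirit. The gadget $\{c,d,\ell_1,\ell_2\}$ does exactly what you say: the two pendants force $d\ell_1,d\ell_2\in T$, connectivity forces $cd\in T$, so $d$ becomes the mandatory branch vertex and $c$ is capped at degree~$2$. One small point worth tightening: you jump to ``exactly one edge $cv_j$ lies in $T$'' directly from $d_T(c)\le 2$, which only gives \emph{at most} one; the ``at least one'' half follows because the $v_i$ must reach $d$ in $T$ and every such $T$-path passes through $c$. Once that is said, $T-cv_j$ restricted to $V(G)$ is a spanning subtree of $G$ using only $G$-edges with maximum degree~$2$, hence a Hamiltonian path, and the equivalence is complete. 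Your remark restricting to connected $G$ is harmless but unnecessary: the same forcing argument shows $H$ has no spanning spider when $G$ is disconnected.
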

So finding necessary and sufficient conditions for the existence of SGC in a graph does not seem to be an easy task! One of the results which was particularly interesting for us is the following:

\begin{lem}
\label{C}
\emph{(Theorem 8 of \cite{MR2074842})} Let $G$ be a connected graph. Then $s(G) \le 2 \lceil \frac{\alpha(G)}{\kappa(G)} \rceil - 2$, where $s(G)$ is the minimum number of branch vertices in a spanning tree in $G$. 
\end{lem}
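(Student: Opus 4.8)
For any tree $T$, $\sum_{v\in V(T)}\bigl(\deg_T v-2\bigr)=2|E(T)|-2|V(T)|=-2$, so if $L(T)$ is the number of leaves and $b(T)$ the number of branch vertices, then $L(T)=2+\sum_{\deg_T v\ge 3}\bigl(\deg_T v-2\bigr)\ge b(T)+2$. Hence $s(G)\le L(T)-2$ for \emph{every} spanning tree $T$ of $G$, and it is enough to exhibit one spanning tree with at most $2\lceil\alpha(G)/\kappa(G)\rceil$ leaves.

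\textbf{An extremal tree; the easy range $\kappa\le 2$.} Write $k=\kappa(G)$ and choose a spanning tree $T$ minimizing $L(T)$. I would first show that the set $\Lambda$ of leaves of $T$ is independent in $G$. If not, pick $x,x'\in\Lambda$ with $xx'\in E(G)$ and let $x=u_0,u_1,\dots,u_m$ be the \emph{leg} of $x$, the maximal path of $T$ in which $u_1,\dots,u_m$ have degree $2$ and the remaining $T$-neighbour $b$ of $u_m$ is a branch vertex (if $T$ is a path then $L(T)=2\le 2\lceil\alpha/k\rceil$ and we are done). Since the $x$--$x'$ path of $T$ leaves the leg through $b$, the unique cycle of $T+xx'$ contains the edge $u_mb$ (or $xb$ when $m=0$); deleting that edge yields a spanning tree, and a short degree count shows it has strictly fewer leaves than $T$, a contradiction. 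Thus $L(T)\le\alpha(G)$, which already gives $L(T)\le 2\lceil\alpha(G)/k\rceil$ when $k\le 2$, settling that range (and $k=1$, $k=2$ are in fact the only cases where the bare estimate $s(G)\le\alpha(G)-2$ fails to be strong enough).

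\textbf{The range $\kappa\ge 3$: augmenting the independent set.} For $k\ge 3$ one must do better, and it suffices to produce from $T$ an independent set of size at least $k\bigl(\lceil L(T)/2\rceil-1\bigr)+1$, since that inequality forces $\lceil\alpha(G)/k\rceil\ge\lceil L(T)/2\rceil$ and hence $L(T)\le 2\lceil\alpha(G)/k\rceil$. The plan is to use $k$-connectivity to show that each leg of $T$ is long — of order $k$ — for instance by applying Menger's theorem to a leaf and a vertex far from it (so that a short leg would expose a cut of size $<k$) or by further exchange moves of the type above; a path on $\ge k$ vertices carries an independent set of size $\lceil k/2\rceil$, so harvesting such a set from each of the $L(T)$ legs gives a total of at least $\tfrac12 k\,L(T)$ vertices, comfortably above the target. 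The genuine difficulty, and the main obstacle, is that the legs need not be induced and may send chords to each other and to the branch vertices, so one must argue carefully that each local harvest really has size $\ge\lceil k/2\rceil$ and — the delicate point — that the harvests from different legs can be chosen mutually non-adjacent; this last step is where connectivity is invoked a second time. Everything else is the bookkeeping of the first two paragraphs together with the tree identity.
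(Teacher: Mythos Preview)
The paper does not prove this statement at all: it is quoted as Theorem~8 of Gargano, Hammar, Hell, Stacho and Vaccaro, and the whole point of the surrounding discussion is precisely that the proof given there rests on a false auxiliary lemma --- the claim (labelled ``Theorem~D'' in the note) that the vertices of any graph can be covered by at most $\lceil \alpha(G)/\kappa(G)\rceil$ vertex-disjoint paths. The authors exhibit $K_{m,2m}$ as a counterexample to that auxiliary claim and later write explicitly that they ``don't know whether Theorem~\ref{C} is true or wrong''. So there is no proof in the paper for your attempt to be compared against; the status of the inequality is left open.

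For context, the argument you would be replacing ran: cover $V(G)$ by $p\le\lceil\alpha/\kappa\rceil$ vertex-disjoint paths, then link them with $p-1$ extra edges into a spanning tree; each extra edge creates at most two branch vertices, whence $s(G)\le 2(p-1)$. Your alternative route via the leaf--branch identity $b(T)\le L(T)-2$ and an independent-set harvest from the legs of a leaf-minimal spanning tree is a reasonable plan, and the first two paragraphs are sound. But, as you yourself flag, the third paragraph is only an outline: the assertion that each leg has length of order $k$ is not justified (a short leg does not obviously yield a cutset of size $<k$ once chords from the leg to the rest of the tree are present), and the mutual non-adjacency of the harvests from different legs is explicitly deferred as ``the delicate point''. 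Until those two steps are carried out, your argument does not establish the bound for $\kappa\ge 3$, which is the entire nontrivial range --- so at present it neither proves the theorem nor repairs the gap the paper identifies.
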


But we found that the following wrong theorem was used to prove Theorem \ref{C} which is:
\begin{lem}
\label{D}
\emph{(\cite{MR2074842})} Vertices of any graph G can be covered by at most $\lceil \frac{\alpha(G)}{\kappa(G)} \rceil$ vertex disjoint paths.
\end{lem}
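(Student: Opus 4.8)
The statement as written is false --- the excerpt already announces this --- so a genuine ``proof proposal'' here means laying out the natural line of attack, seeing precisely where it collapses, and then proposing the counterexample that the flagged error ought to yield. The tempting strategy is a Chv\'atal--Erd\H{o}s style induction on $\lceil\alpha(G)/\kappa(G)\rceil$. The base case $\lceil\alpha(G)/\kappa(G)\rceil=1$, i.e. $\alpha(G)\le\kappa(G)$, is handled by the Chv\'atal--Erd\H{o}s theorem: such a $G$ (on at least three vertices) is Hamiltonian, so a single path covers $V(G)$. For the inductive step one would pick a longest path $P$ in $G$, set $G'=G-V(P)$, and try to prove $\alpha(G')\le\alpha(G)-\kappa(G)$; then $\lceil\alpha(G')/\kappa(G')\rceil\le\lceil(\alpha(G)-\kappa(G))/\kappa(G)\rceil=\lceil\alpha(G)/\kappa(G)\rceil-1$, and the paths covering $G'$ together with $P$ would give the claimed bound.

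I expect this to be exactly where both the argument and the theorem break. Maximality of $P$ only forces the neighbours of an endpoint of $P$ to lie on $P$, which yields at best $\alpha(G')\le\alpha(G)-1$, a drop of one rather than of $\kappa(G)$; the fan-lemma refinement that powers Chv\'atal--Erd\H{o}s produces an independent set of size $\kappa(G)+1$ but does not localise it in a way that feeds a clean induction. Worse, deleting a whole path can destroy the connectivity: $G'$ may be a set of isolated vertices with $\kappa(G')=0$, so one cannot reapply the inductive hypothesis with the original $\kappa$. These are not bookkeeping gaps; they reflect that the bound $\lceil\alpha/\kappa\rceil$ is simply too strong.

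Rather than attempt to patch it, I would test the claim on complete bipartite graphs, where all three quantities are transparent. For $G=K_{k,n}$ with $n>k$ one has $\kappa(G)=k$ and $\alpha(G)=n$, while every path alternates between the two sides and hence contains at most one more vertex of the $n$-side than of the $k$-side; thus a cover of $V(G)$ by $p$ vertex-disjoint paths satisfies $n\le k+p$, so at least $n-k$ paths are needed (and $n-k$ clearly suffice). Lemma~\ref{D} would therefore force $n-k\le\lceil n/k\rceil$, which already fails for $K_{2,6}$: there $\kappa=2$, $\alpha=6$, $\lceil\alpha/\kappa\rceil=3$, yet $n-k=4$ paths are required; taking $n$ large relative to $k$ (e.g.\ $K_{3,8}$, $K_{k,k^2}$) makes the gap arbitrarily large. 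Exhibiting such a $K_{k,n}$ is the step I would present in place of a proof of Lemma~\ref{D}.
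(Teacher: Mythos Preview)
Your proposal is correct: the statement is false, and exhibiting a complete bipartite counterexample is exactly what the paper does. The paper takes $G=K_{m,2m}$ (so $\alpha=2m$, $\kappa=m$, $\lceil\alpha/\kappa\rceil=2$) and argues that a cover by two vertex-disjoint paths would yield, after joining by one edge, a spanning tree with at most two vertices of degree~$3$; summing degrees on the small side then bounds the number of edges by roughly $2m$, contradicting $|E(T)|=3m-1$ for $m$ large.

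Your route through the same family is more direct and more general. The alternation inequality $|P_u\cap B|\le|P_u\cap A|+1$ summed over the paths gives $p\ge n-k$ for any $K_{k,n}$ with $n>k$, so the claimed bound fails as soon as $n-k>\lceil n/k\rceil$; this already bites at $K_{2,6}$, well before the paper's threshold $m>5$. The paper's edge-count via a glued spanning tree reaches the same conclusion but with an extra layer of packaging (and a small arithmetic slip, $2(m-2)+6=2m+2$ rather than $2m+4$, which only sharpens the range of $m$). Your preliminary analysis of why the Chv\'atal--Erd\H{o}s style induction stalls is a nice addition that the paper does not include.
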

We checked the reference \cite{MR0297600} to which the authors have referred, and we could not find that used result in there. Indeed, we have a counterexample to ``Theorem \ref{D}'':

\begin{counterexample}
Let $G=K_{m,2m}$. We have $\alpha(G)=2\kappa(G)=2m$ but we can not cover the vertices of $G$ by at most two vertex disjoint paths $P$ and $Q$.
\end{counterexample}
\begin{proof}
If $P=\emptyset $, it means that $G$ has a hamiltonian path which is obviously wrong. So $P$ and $Q$ are not empty. Since $G$ is connected there is an edge $e$ which is incident with a vertex in $P$ and also a vertex in $Q$. Adding this edge leads to a spanning tree $T$ which has all vertices of degree at most $2$, except for at most two vertices of degree $3$. $G$ is bipartite, so the edges of $T$ have exactly one vertex in the part with $m$ vertices. But we have:  
\begin{align*}
|E(T)| \le 2+2+\cdots +2+3+3=2(m-2)+6=2m+4.
\end{align*}

On the other hand, since $T$ is a spanning tree we know that $|E(T)|=|V(G)|-1=3m-1$ which is greater than $2m+4$ for $m > 5$. So Theorem \ref{D} is wrong. 
\end{proof}

In fact, we have:
\begin{lem}
\emph{(\cite{MR1271279})} Vertices of any graph G can be covered by at most $\lceil \frac{\alpha(G)}{\kappa(G)} \rceil$ cycles.
\end{lem}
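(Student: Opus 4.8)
The plan is to prove the statement by an extremal argument on cycle covers, with the Chv\'{a}tal--Erd\H{o}s theorem handling the extreme case. Write $k=\kappa(G)$ and $\alpha=\alpha(G)$, and keep the convention that a single vertex and a single edge each count as a (degenerate) cycle. If $\alpha\le k$ then $\lceil\alpha/k\rceil=1$, and the Chv\'{a}tal--Erd\H{o}s theorem (together with the trivial cases $|V(G)|\le 2$, handled separately) shows $G$ is Hamiltonian, so a single cycle covers $V(G)$. Thus it remains to treat $\alpha\ge k+1$, i.e.\ $t^{*}:=\lceil\alpha/k\rceil\ge 2$, which is the substantial case.

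For that case I would pick a family $\mathcal F=\{C_{1},\dots,C_{t}\}$ of cycles with $\bigcup_{i}V(C_{i})=V(G)$ and $t$ minimum, and, among such families, one minimizing $\sum_{i}|V(C_{i})|$ (forcing the cycles to overlap as little as possible and to carry no superfluous vertices). Assume for contradiction $t\ge t^{*}+1$, so $tk>\alpha$; the aim is to build an independent set of size at least $tk>\alpha$, a contradiction. The mechanism is two-sided: from each cycle that is long enough to contain $k$ pairwise non-adjacent vertices one selects such a set, choosing the selections for different cycles compatibly so that their union stays independent---this is one place $k$-connectivity enters, via fans of $k$ internally disjoint paths (Menger's theorem) between a cycle $C_{i}$ and $\bigcup_{j\ne i}V(C_{j})$; and whenever two cycles are ``too close'' for such compatible selections, the same fans are used to reroute them into a single cycle, contradicting the minimality of $t$. (Dirac's theorem---any $k$ prescribed vertices of a $k$-connected graph lie on a common cycle---is what lets one convert a chosen set of $k$ independent vertices back into a cycle during a rerouting.)

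The step I expect to be the main obstacle is the handling of short cycles: a cycle $C_{i}$ too short to contain $k$ pairwise non-adjacent vertices contributes nothing to the independent set, so it must instead be absorbed into a neighbouring cycle, which decreases $t$ and contradicts minimality. Realizing this absorption needs the fan of $k$ internally disjoint paths from $C_{i}$ to the other cycles plus a careful rerouting along one of them, and one must verify that the exchange never increases $t$ and respects the secondary extremal choice of $\sum_{i}|V(C_{i})|$; this bookkeeping---ruling out every configuration of short or mutually close cycles---is the technical heart of the proof. Once it is done, the inequality $tk>\alpha$ against the extracted independent set closes the argument. An alternative organisation is an induction on $\alpha$: delete from $G$ a cycle through $k$ well-chosen vertices of a maximum independent set so that the residual independence number drops by $k$, recurse on the components, and add the bounds; but the components of $G-V(C)$ need not be $k$-connected, which reintroduces essentially the same fan-and-reroute difficulty.
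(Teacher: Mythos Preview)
The paper does not prove this statement at all: it is quoted verbatim from Kouider's paper \cite{MR1271279} as a replacement for the incorrect ``Theorem~\ref{D}'', with no argument supplied. So there is no ``paper's own proof'' to compare against; any comparison would have to be with Kouider's original argument.

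As for your proposal itself, it is not a proof but a plan for one, and you say so explicitly: you identify the extremal set-up (a minimum cycle cover, secondarily minimizing $\sum_i|V(C_i)|$), the intended contradiction (an independent set of size $tk>\alpha$), and the tools (Menger fans, Dirac's cycle-through-$k$-vertices theorem), but you stop precisely at the point where the work begins. The sentence ``this bookkeeping---ruling out every configuration of short or mutually close cycles---is the technical heart of the proof. Once it is done, \dots'' is an admission that the crucial step is missing. In particular, the claim that from each long cycle one can select $k$ vertices that are pairwise non-adjacent \emph{in $G$} (not merely on the cycle), and do so compatibly across all cycles so that the union is still independent, is exactly the hard part; nothing in your outline explains why the fans produced by Menger's theorem force either this compatibility or a merging of two cycles. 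Your alternative inductive route has the same gap, as you yourself note: deleting a cycle destroys $k$-connectivity, so the induction hypothesis does not apply to the pieces.

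In short: the paper offers no proof to match, and your proposal is a reasonable sketch of the overall shape of Kouider's argument but leaves the substantive lemma unproved.
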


But what is the relation of Theorem \ref{C} to the existence of SGC? At first glance, the relation between connectivity, independence and existence of certain kind of spanning trees might not be obvious. But there exist many theorems discussing about this relation \cite{MR2746831}. 

The following theorem answers the above question:
\begin{theorem}
\label{1}
If $s(G) \le \kappa(G)$, then $G$ has a spanning generalized caterpillar.
\end{theorem}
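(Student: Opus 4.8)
The plan is to start from a spanning tree $T$ of $G$ that realises $s(G)$ --- so $T$ has exactly $s:=s(G)$ branch vertices, say $B=\{b_1,\dots ,b_s\}$ --- and then to reassemble it into a spanning tree all of whose branch vertices sit on a single path. If $s\le 1$ there is nothing to prove: a spanning tree with no branch vertex is a Hamiltonian path and one with a single branch vertex is a spanning spider, so in either case $T$ is already a spanning generalized caterpillar (and the hypothesis $s(G)\le\kappa(G)$ is not even needed here).

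Assume now $s\ge 2$. The hypothesis gives $\kappa(G)\ge s$, so $G$ is $s$-connected with $s\ge 2$, and I would invoke Dirac's classical theorem --- in a $k$-connected graph with $k\ge 2$ any $k$ vertices lie on a common cycle --- with $k=s$, to obtain a cycle $C$ of $G$ passing through all of $b_1,\dots ,b_s$. Deleting one edge of $C$ yields a path $P$ in $G$ with $B\subseteq V(P)$; this will be the spine of the caterpillar we build.

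The heart of the proof is the following surgery. Since $B\subseteq V(P)$, every vertex of $T$ lying outside $V(P)$ is a non-branch vertex of $T$, so the forest $T-V(P)$ has maximum degree at most $2$, i.e.\ it is a disjoint union of (possibly trivial) paths $R_1,\dots ,R_m$. Two observations make the spine work: (i) an internal vertex of some $R_j$ has both its $T$-neighbours inside $R_j$, for otherwise it would have degree $\ge 3$ in $T$; hence every edge of $T$ joining $R_j$ to $V(T)\setminus V(R_j)$ is incident with an endpoint of $R_j$, and, since the external neighbours of $R_j$ cannot lie in another $R_i$, such an edge joins that endpoint to $V(P)$; (ii) as $T$ is connected and $V(P)\ne\emptyset$, each $R_j$ has at least one such edge. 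Define $T^{*}$ by keeping all edges of $P$ together with, for each $j$, all edges of $R_j$ and exactly one edge of $T$ from an endpoint of $R_j$ to $V(P)$. Counting gives $|E(T^{*})|=(|V(P)|-1)+\sum_j\big((|V(R_j)|-1)+1\big)=|V(G)|-1$, and $T^{*}$ is connected, hence a spanning tree of $G$; moreover every vertex outside $V(P)$ has degree at most $2$ in $T^{*}$, so all branch vertices of $T^{*}$ lie on the path $P$. Thus $T^{*}$ is a spanning generalized caterpillar.

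I expect the only real difficulty to be this surgery --- namely, pinning down that after deleting the spine $V(P)$ the remainder breaks into paths, each of which can be reattached to the spine by a single edge at one of its endpoints, so that the reattachment creates no branch vertex off the spine. The remaining ingredients --- the two degenerate cases, the invocation of Dirac's theorem, and the edge count certifying that $T^{*}$ is a tree --- are routine.
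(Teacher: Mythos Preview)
Your proof is correct and follows essentially the same strategy as the paper: both invoke Dirac's theorem (the paper phrases it as ``any set of $\kappa(G)$ vertices lies on a cycle'') to thread a cycle through the branch vertices of a spanning tree realising $s(G)$, then turn that cycle into the spine. The only difference is in execution: the paper adds the cycle's edges to $T$ and then deletes one edge per surviving cycle (keeping the spine intact), whereas you explicitly decompose $T-V(P)$ into paths and reattach each by a single $T$-edge to an endpoint---your version is a bit more constructive, but the underlying idea is the same.
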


\begin{proof}
We use the following well-known lemma to prove it.
\begin{lemmax}
\emph{(\cite{MR1367739})}
In any graph $G$, any set with $\kappa(G)$ vertices can be covered by a cycle. 
\end{lemmax}
The proof of the lemma given above can be found in \cite{MR1367739}, page 170. Now since there exists a spanning tree $T$ with at most $\kappa(G)$ vertices, we can add a cycle $C$ to the edges of $T$ so that in the new spanning subgraph $T'$ all vertices of degree more than $2$ are on that cycle. Now delete one arbitrary edge from $C$, like $e$, and from each cycle in $T'$ delete one arbitrary edge which is not in $C-e$ (as $C-e$ is a path we can do this). At last, we obtain a spanning tree in which all vertices of degree more than $2$ are on a path : $C-e$. So we have an SGC.
\end{proof}

Now if we consider graphs for which $\alpha(G) \le \frac{(\kappa(G))^2+\kappa(G)}{2}$, then by Theorem \ref{C} we have:
\begin{align*}
s(G) \le 2\lceil \frac{\frac{(\kappa(G))^2+\kappa(G)}{2}}{\kappa(G)} \rceil-2 =\left\{\begin{array}{ll} \kappa(G) ,&\mbox{ if }
2 | \kappa(G) \\ \kappa(G)-1 , & \mbox{ if } 2|\kappa(G)+1.  \end{array}\right. 
\end{align*}
So as a corollary of Theorem \ref{1}, we can deduce that for all graphs with $\alpha(G) \le \frac{(\kappa(G))^2+\kappa(G)}{2}$ we have an SGC.

Now we try to construct graphs which do not have SGC:
\begin{theorem}
\label{2}
There exists a graph G having no spanning generalized caterpillar as in the following construction:
\begin{enumerate}
\item
Consider an independent set of vertices $S=\{v_1,\ldots,v_m\}$.
\item
Consider $m+2$ graphs $G_i=K_{2m+1,m} \ , \ 1 \le i \le m+2$. From each $G_i$ select $m$ arbitrary vertices from the part with $2m+1$ vertices like $\{t_{i,1}, \ldots,t_{i,m}\}$ and for each $1 \le i \le m+2$ add the edge $t_{i,j}v_r$ for all $1 \le r,j \le m$.

\end{enumerate}

\end{theorem}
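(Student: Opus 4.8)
The plan is to argue by contradiction: assume the graph $G$ produced by the construction has a spanning generalized caterpillar $T$, and obtain a contradiction from a counting argument confined to one of the copies $G_i$. First I would record the structural features of $G$ that the argument uses: the subgraphs $G_1,\dots,G_{m+2}$ are pairwise vertex-disjoint, no edge is added inside any $G_i$, so the subgraph of $G$ induced on $V(G_i)$ is exactly $K_{2m+1,m}$; and the only edges of $G$ with one endpoint in $V(G_i)$ and the other outside are the edges $t_{i,j}v_r$, each incident to one of the $m$ vertices $t_{i,1},\dots,t_{i,m}$, which lie in the side of $G_i$ of size $2m+1$. In particular, the connected components of $G-S$ are precisely $G_1,\dots,G_{m+2}$.

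The first real step is to find a copy that the spine of $T$ completely avoids. Since $T$ is a spanning generalized caterpillar, all its branch vertices lie on a single path $P\subseteq T$ (if $T$ has no branch vertex at all, take $P$ to be a single vertex). Because $S$ is independent and $|S|=m$, deleting from $P$ its vertices that lie in $S$ breaks $P$ into at most $m+1$ subpaths, each avoiding $S$ and hence contained in a single $V(G_i)$; as every vertex of $P$ that lies in some $V(G_i)$ belongs to one of these subpaths, $P$ meets at most $m+1$ of the $m+2$ copies. Therefore some copy $G_{i_0}$ satisfies $V(G_{i_0})\cap V(P)=\emptyset$, so $V(G_{i_0})$ contains no branch vertex of $T$, i.e.\ $d_T(v)\le 2$ for every $v\in V(G_{i_0})$.

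Now let $F$ be the subgraph of $T$ induced on $V(G_{i_0})$. It is a forest of maximum degree at most $2$, hence a disjoint union of paths $C_1,\dots,C_c$ whose vertex sets partition $V(G_{i_0})$, and I would pin $c$ between two bounds. Write $A,B$ for the two sides of $G_{i_0}$, so $|A|=2m+1$, $|B|=m$ and $\{t_{i_0,1},\dots,t_{i_0,m}\}\subseteq A$. Each $C_j$ is a path in the bipartite graph $K_{2m+1,m}$, so $\big|\,|V(C_j)\cap A|-|V(C_j)\cap B|\,\big|\le 1$; summing over $j$ gives $c\ge\sum_j\big(|V(C_j)\cap A|-|V(C_j)\cap B|\big)=(2m+1)-m=m+1$. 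On the other hand, since $T$ is connected, each $C_j$ must have a $T$-edge leaving $V(G_{i_0})$, and every such edge of $G$ is incident to one of $t_{i_0,1},\dots,t_{i_0,m}$; as the $C_j$ are vertex-disjoint, $c\le m$. Thus $m+1\le c\le m$, the contradiction, and $G$ has no spanning generalized caterpillar.

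The step I expect to need the most care is the spine argument --- in particular making precise the bound ``a path in $G$ meets at most $m+1$ of the copies'' and checking the degenerate cases (a tree with a single branch vertex, or none at all, in which case $T$ would be a Hamiltonian path and is excluded by the very same subpath count). The remaining estimates are just bookkeeping with the bipartition of $K_{2m+1,m}$ and with the connectivity of a spanning tree, so I do not anticipate difficulty there.
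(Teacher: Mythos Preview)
Your argument is correct, and it is a genuinely different organization of the proof than the one in the paper. The bipartite counting you carry out inside $G_{i_0}$ (your lower bound $c\ge m+1$ from $|A|-|B|=m+1$ together with the upper bound $c\le m$ from the fact that each component of $T[V(G_{i_0})]$ must contain some $t_{i_0,j}$) is exactly the content of the paper's Claim~1. The difference is in how this is deployed. The paper first proves Claim~1 for \emph{every} $G_i$, so every copy contains a branch vertex, and then, in a second step (Claim~2), follows the spine $Q$ through all $m+2$ copies, doing a three-case analysis on the endpoints of each maximal $Q\cap G_i$ segment to show that $Q$ carries at least $2m+2$ edges incident with $S$, which is impossible since $|S|=m$ and $Q$ is a path. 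You go the other way around: a one-line pigeonhole on the spine (removing at most $m$ vertices of $S$ from $P$ leaves at most $m+1$ subpaths, each trapped in a single $G_i$) hands you a copy $G_{i_0}$ avoided by the spine, and then the Claim~1 argument applied to that single copy is already the contradiction. Your route is shorter and avoids the endpoint case analysis of Claim~2; the paper's route, on the other hand, makes the global obstruction (too many spine crossings through the small separator $S$) more explicit. The point you flag as delicate --- the subpath bound on the spine and the degenerate cases with at most one branch vertex --- is handled correctly in your write-up.
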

\begin{figure}[h]

\ \ \ \ \ \ \ \ \ \ \ \ \ \ \  \includegraphics[scale=0.7]{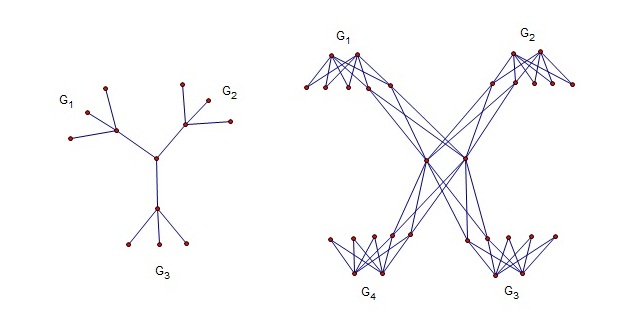}
\caption{Cases $m=1$ and $m=2$}
\end{figure}
\renewcommand\qedsymbol{$\blacksquare$}

\begin{proof}
We have shown the cases $m=1$ and $m=2$ in Figure 1. We show by contradiction that G does not have an SGC. Suppose that we have an SGC like $T$. Consider $P$, the set that contains all branch vertices. Let $A_i$ be the part with $m$ vertices in $G_i$ and $B_i$ be the other part which has $2m+1$ vertices.
\begin{claim}
for each $i$, $1 \le i \le m+2$ we have $ G_i \cap P \neq \emptyset$.
\end{claim}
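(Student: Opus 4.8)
The plan is to argue by contradiction: suppose that for some index $i$ we have $G_i \cap P = \emptyset$, i.e.\ the subgraph $T$ restricted to $G_i$ contains no branch vertex of $T$. I would first analyze how $T$ interacts with the block $G_i$. The only edges leaving $G_i$ go from the selected vertices $t_{i,1},\ldots,t_{i,m}$ (all in $B_i$) to the vertices $v_1,\ldots,v_m$ of the independent set $S$. So in the spanning tree $T$, removing $G_i$ disconnects it into the forest $T\cap G_i$ together with the rest; the edges of $T$ crossing the cut are among the $m^2$ edges $t_{i,j}v_r$. Since $T$ is a tree, the number of such crossing edges equals (number of connected components of $T\cap G_i$) — call these components $C_1,\ldots,C_k$, so there are exactly $k$ crossing edges and hence $k \le m$.

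Next I would count degrees inside $G_i$. Since no vertex of $G_i$ is a branch vertex of $T$, every vertex of $G_i$ has degree at most $2$ in $T$. But the vertices $t_{i,j}$ may also be endpoints of crossing edges, so inside $T\cap G_i$ their degree is even smaller: if $t_{i,j}$ has $c_j \ge 0$ crossing edges incident to it then its degree within $T\cap G_i$ is at most $2 - c_j$. All other vertices of $G_i$ (the $m$ vertices of $A_i$ and the $m+1$ non-selected vertices of $B_i$) have $T\cap G_i$-degree at most $2$. Now $T\cap G_i$ is a forest on $|V(G_i)| = 3m+1$ vertices with $k$ components, so it has exactly $3m+1-k$ edges, whence the degree sum inside $G_i$ is $2(3m+1-k) = 6m+2-2k$. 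On the other hand the degree sum is at most $2(3m+1) - \sum_j c_j = 6m+2 - k$, which gives no contradiction yet — so the real leverage must come from the bipartite structure of $G_i = K_{2m+1,m}$, exactly as in the counterexample argument earlier: every edge of $T\cap G_i$ has exactly one endpoint in $A_i$ (the small side, $m$ vertices), and the $A_i$-vertices have $T$-degree at most $2$, so $T\cap G_i$ has at most $2m$ edges. Combining $3m+1-k = |E(T\cap G_i)| \le 2m$ forces $k \ge m+1$, contradicting $k \le m$.

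The main obstacle, then, is getting the bipartite edge-count bound to interact correctly with the component count and the crossing edges: one must be careful that the $k$ crossing edges are attached at $A_i$-vertices would help, but in fact they attach at $B_i$-vertices $t_{i,j}$, so the cleanest route is the one above — bound $|E(T\cap G_i)| \le 2m$ purely from the small side $A_i$ and the degree-$\le 2$ hypothesis, then use the forest identity $|E(T\cap G_i)| = (3m+1) - k$ and the tree-cut identity $k \le m$ to reach $3m+1 - k \le 2m$, i.e.\ $k \ge m+1 > m \ge k$. This contradiction establishes that $G_i \cap P \neq \emptyset$ for every $i$, proving the Claim. I would present it in exactly this order: cut structure and the bound $k \le m$; the bipartite edge bound $|E(T\cap G_i)|\le 2m$; the forest identity; and the final arithmetic contradiction.
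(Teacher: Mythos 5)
Your overall strategy is the paper's: assume $G_i$ contains no branch vertex, so $T\cap G_i$ is a spanning forest of $G_i$ with maximum degree at most $2$, bound its number of components $k$ by $m$, and use the bipartite structure of $G_i=K_{2m+1,m}$ to force $k\ge m+1$. The bipartite half of your argument is fine: the global count $|E(T\cap G_i)|\le 2m$ (each of the $m$ vertices of $A_i$ has $T$-degree at most $2$ and every edge of $G_i$ meets $A_i$) together with the forest identity $|E(T\cap G_i)|=3m+1-k$ is just a repackaging of the paper's per-path inequality $|P_u\cap B_i|\le|P_u\cap A_i|+1$. The gap is in your derivation of $k\le m$. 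First, it is not true that the number of $T$-edges crossing the cut equals $k$: contracting the components of $T\cap G_i$ and of $T-V(G_i)$ shows that the number of crossing edges is $k+l-1$, where $l$ is the number of components of $T-V(G_i)$, and $l$ need not be $1$ (the tree may route between different parts of $G-V(G_i)$ through $G_i$); in particular a single component of $T\cap G_i$ can carry two crossing edges, e.g. a path whose two ends are distinct vertices $t_{i,j}$, each joined in $T$ to a vertex of $S$. Second, even granted an exact count of crossing edges, nothing you state bounds that count by $m$: each of the $m$ vertices $t_{i,j}$ may be incident to up to two crossing edges, so a priori there could be as many as $2m$ of them, and ``among the $m^2$ edges $t_{i,j}v_r$'' only gives $m^2$. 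So the step ``hence $k\le m$'' does not follow as written.

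The inequality $k\le m$ is true, and the correct reason is the one the paper uses: since $T$ is connected and $V(G_i)\ne V(G)$, every component of $T\cap G_i$ must contain a vertex incident in $T$ to an edge leaving $G_i$, and the only vertices of $G_i$ with neighbours outside $G_i$ are $t_{i,1},\dots,t_{i,m}$; as the components are vertex-disjoint, $k\le m$. (The paper phrases this as: each path of $T\cap G_i$ has one of its ends among the $t_{i,j}$ --- the attachment must indeed be at an end, since an interior attachment would give that vertex degree $3$ in $T$, i.e. a branch vertex inside $G_i$; for the bound $k\le m$ mere containment already suffices.) With this repair your counting goes through and the argument coincides, in substance, with the paper's proof of the claim.
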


$\it{Proof \ of \ the \ claim \ 1.}$ If for some $i$ we have $G_i \cap P = \emptyset$, by considering the definition of GC, it means that all vertices of $G_i$ are covered by, let us say, $n$ vertex disjoint paths. And also, each of these paths has at least one of its ends, in the set of vertices $\{t_{i,1},\ldots,t_{i,m}\}$, because these are the only vertices which are connected to the vertices outside of $G_i$. Hence we get $n \le m$. 

If we denote the paths by $\{P_1,\ldots,P_n\}$, then obviously as the graph $G_i$ is bipartite  we have $|P_u \cap B| \le |P_u \cap A|+1$, for each $u$, $1 \le u \le n$. So we obtain:
\begin{align*}
2m+1=|B|=\sum_{u=1}^{n} |P_u \cap B| \le \sum_{u=1}^{n} (|P_u \cap A|+1)=|A|+n \implies m+1 \le n,
\end{align*}
which is a contradiction with $n \le m$. Therefore for each $i$, $1 \le i \le m+2 $ we have $ \ G_i \cap P \neq \emptyset$.   $ \qquad \qquad \qquad \qquad \qquad \qquad \qquad \qquad \qquad \qquad \qquad \qquad \qquad \qquad \qquad \ \  \ \square$

Define $Q$ to be the shortest path in $T$ that contains all branch vertices. Now we claim the following and the contradiction is immediate:
\begin{claim}
We have more than $2m$ edges which are in $Q$ and also are incident to a vertex in $S$.
\end{claim}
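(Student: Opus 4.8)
The plan is to exploit Claim 1 together with the rigid adjacency structure of $G$: the vertex set $V(G)$ is the disjoint union of $S$ and the sets $V(G_i)$, $1\le i\le m+2$; the set $S$ is independent; and every edge of $G$ leaving a given $V(G_i)$ has its other endpoint in $S$, there being no edges between distinct $G_i$ and $G_j$. Since $Q$ is a path of $T$ containing all branch vertices, Claim 1 says that $Q$ meets each of the $m+2$ sets $V(G_i)$.

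First I would cut $Q$ into \emph{blocks}, by which I mean the maximal subpaths of $Q$ whose vertices all lie in a single $V(G_i)$. Two consecutive vertices of $Q$ can lie neither both in $S$ (independence) nor in two distinct sets $V(G_i),V(G_j)$ (no edges between distinct $G_i$ and $G_j$); hence the blocks partition $V(Q)\setminus S$, and between two consecutive blocks along $Q$ sits exactly one vertex of $S$. Each block lies in a single $V(G_i)$, and since $Q$ meets every $V(G_i)$, each of them contains at least one block, so the number $p$ of blocks satisfies $p\ge m+2$.

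Now I would count. List the blocks as $B_1,\dots,B_p$ in their order along $Q$ and let $s_j\in S$ be the vertex of $Q$ lying between $B_j$ and $B_{j+1}$ for $1\le j\le p-1$. Each $s_j$ has a $Q$-neighbour in $B_j$ and one in $B_{j+1}$, so it is an internal vertex of $Q$ incident to exactly two edges of $Q$, both incident to $S$. The vertices $s_1,\dots,s_{p-1}$ are pairwise distinct and, $S$ being independent, no edge of $Q$ is incident to two of them; therefore $Q$ has at least $2(p-1)\ge 2(m+1)=2m+2$ edges incident to $S$, which is the claim. The announced contradiction is then immediate, since $|S|=m$ and each vertex of $S$ lies on the path $Q$ and so meets at most two edges of $Q$, giving at most $2m$ edges of $Q$ incident to $S$.

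The only steps requiring real care are the two structural facts in the middle paragraph — that the vertices strictly between consecutive blocks are lone vertices of $S$ (this uses both the independence of $S$ and the absence of edges between distinct $G_i$ and $G_j$), and that the $2(p-1)$ exhibited edges are genuinely distinct — but both are elementary. Essentially the proof reduces to Claim 1 plus the pigeonhole inequality $p\ge m+2>m=|S|$, so I do not anticipate a substantive obstacle.
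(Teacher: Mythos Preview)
Your proof is correct and follows essentially the same idea as the paper: since $Q$ meets each of the $m+2$ subgraphs $G_i$ and any passage along $Q$ from one $G_i$ to another must go through a single vertex of $S$, one obtains at least $2m+2$ edges of $Q$ incident to $S$. The paper organizes the count by picking one subpath of $Q\cap G_i$ per $i$ and doing a short case analysis on whether its endpoints are endpoints of $Q$, whereas your block decomposition of $V(Q)\setminus S$ and the count of the $p-1\ge m+1$ separating $S$-vertices is a slightly cleaner packaging of the same argument.
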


$\it{Proof \ of \ the \ claim \ 2.}$ In the last claim we proved that for each $i$, $ 1 \le i \le m+2 $ we have $G_i \cap Q \neq \emptyset$ . Obviously $G_i \cap Q$ is a set of vertex disjoint paths. Take one of them like $L$. Now we look at the ends of $L$, namely $x$ and $y$. There exist three cases:
\begin{itemize}
\item[(i)]
 $x$ or $y$ (and not both) is at the end of $Q$.
\item[(ii)]
both of $x,y$ are vertices of degree $2$ in $Q$.
\item[(iii)]
$x,y$ are the ends of $Q$.
\end{itemize}

In the case (i), if $y$ is at the end of $Q$ then since $x$ has degree $2$ in $Q$, one of two edges is in $E(G_i)$ and the other is an edge between $G_i$ and $S$. On the other hand as $Q$ has two ends, this case can happen for at most two $G_i$'s.

In the case (ii), both of the vertices have the same condition of $x$ in the last case. So there are two edges between $G_i$ and $S$ which are in $Q$.

As for the case (iii), it can not happen, because $Q$ is connected and has vertices in at least two different $G_i$'s. So there must exist an edge in $Q$ but not in $L$ and incident to one of the vertices of $L$ so that $L$ can be connected to other parts of $Q$ in other $G_i$'s.

Hence by considering the fact that (i) happens at most two times there exist at least:
\begin{align*}
2+2+\cdots +2+1+1=2(m+2-2)+2=2m+2 > 2m
\end{align*}
edges between each $G_i$ and $S$ which are in $Q$.   $\qquad \qquad \qquad \qquad \qquad \qquad \qquad \qquad \ \ \ \ \ \square$

 But $|S|=m$ and each vertex in $Q$ has degree at most $2$. As $\frac{2m+2}{|S|}>2$, there is a vertex $v \in S \cap Q$ which has degree at least $3$ in $Q$. Thus $G$ does not have any SGC.
\end{proof}
It is easy to see that $\alpha(G)=(2m+1)(m+2)$ and $\kappa(G)=m$. Hence, we have proved that if $\alpha(G) \ge (2\kappa(G)+1)(\kappa(G)+2)$, then we may not have SGC. We don't know whether Theorem \ref{C} is true or wrong but if we can construct `better' graphs which have no SGC, i.e. a graph for which the ratio $\frac{\alpha(G)}{\kappa(G)}$ is less than $\frac{\kappa(G)}{2}$, then we may be able to find a counterexample for $s(G) \le 2\lceil \frac{\alpha(G)}{\kappa(G)} \rceil-2$.
\renewcommand\qedsymbol{$\square$}

At last, we provide a bound for $\alpha(G)$ in terms of $\kappa(G)$ which guarantees the existence of SGC:
\begin{theorem}
If $\alpha(G) \le 2\kappa(G)+1$, then $G$ has an spanning generalized caterpillar.
\end{theorem}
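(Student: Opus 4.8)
The plan is to reduce to Theorem~\ref{1}: it is enough to exhibit one spanning tree of $G$ with at most $\kappa(G)$ branch vertices, for then $s(G)\le\kappa(G)$ and Theorem~\ref{1} produces the SGC. (In the easy case where the cycles below can be taken vertex-disjoint, the construction even gives a spanning tree with at most one branch vertex, which is a generalized caterpillar on its own.) The engine is the cover-by-cycles lemma of \cite{MR1271279}. The case $|V(G)|=1$ is trivial, so assume $\kappa(G)\ge 1$; then $\alpha(G)\le 2\kappa(G)+1$ gives
\[
\left\lceil\frac{\alpha(G)}{\kappa(G)}\right\rceil\le\left\lceil 2+\frac{1}{\kappa(G)}\right\rceil=3,
\]
so $V(G)$ is covered by at most three cycles $C_1,C_2,C_3$, where a ``cycle'' is permitted to degenerate to a single edge or a single vertex.

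I would then assemble a spanning tree from these cycles. Identifying each $C_i$ to a point yields a connected graph on at most three vertices, hence a path; after relabelling there are edges $e_1$ of $G$ joining $C_1$ to $C_2$ and $e_2$ joining $C_2$ to $C_3$. In the connected spanning subgraph $H:=C_1\cup C_2\cup C_3\cup\{e_1,e_2\}$ the only vertices of degree exceeding two are the (at most four) endpoints of $e_1$ and $e_2$. Now delete from each genuine cycle $C_i$ a single edge incident to its attachment vertex, chosen so as to drop that vertex's degree; a short case check shows that one can always arrange the two ``outer'' attachment vertices (on $C_1$ and $C_3$) to end with degree $2$, while at most one attachment vertex on the ``middle'' cycle $C_2$ retains degree $3$. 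What remains is connected with $|V(G)|-1$ edges, hence a spanning tree, with at most one branch vertex — which trivially lies on a path, so it is an SGC. One may also shortcut the degree bookkeeping via the identity $\sum_{v:\,d(v)>2}(d(v)-2)=\ell-2$, valid in any tree with $\ell$ leaves: a spanning tree with $\le\kappa(G)+2$ leaves automatically has $\le\kappa(G)$ branch vertices.

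The routine part is the case analysis for the degenerate cycles ($K_1$ and $K_2$) and the precise choice of deleted edges. The delicate point — and the main obstacle — is that the cover-by-cycles lemma need not return \emph{vertex-disjoint} cycles; indeed it cannot always do so, since $K_{2,4}$ has $\lceil\alpha/\kappa\rceil=2$ yet cannot be covered by two vertex-disjoint cycles (even allowing single edges and vertices). In the overlapping case $C_1\cup C_2\cup C_3$ may have vertices of degree up to six, and the bookkeeping above must be replaced by an argument that absorbs the portions of $C_2$ and $C_3$ lying outside the earlier cycles one at a time. A cleaner alternative that avoids the explicit construction is to invoke a known result of the shape ``$\alpha(G)\le\kappa(G)+k-1$ implies $G$ has a spanning tree with at most $k$ leaves'' (of the type surveyed in \cite{MR2746831}), applied with $k=\kappa(G)+2$: this yields a spanning tree with at most $\kappa(G)$ branch vertices, after which Theorem~\ref{1} finishes the proof. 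In that route the only thing left to verify is the statement of the external input.
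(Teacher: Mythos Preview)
Your overall reduction --- show $s(G)\le\kappa(G)$ and then invoke Theorem~\ref{1} --- is exactly the paper's strategy. The difference lies in how that inequality is obtained.

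The paper does not go through Kouider's cycle cover at all. It quotes a theorem of Tsugaki \cite{MR2506391}: if $\alpha(G)\le\kappa(G)+n+1$ with $0\le n\le\kappa(G)$, then $G$ has a spanning tree $T$ with $\Delta(T)\le 3$ and at most $n$ vertices of degree~$3$. Taking $n=\kappa(G)$ turns the hypothesis into $\alpha(G)\le 2\kappa(G)+1$ and yields a spanning tree with at most $\kappa(G)$ branch vertices, so $s(G)\le\kappa(G)$ and Theorem~\ref{1} finishes. (The paper even notes that, because $\Delta(T)\le 3$ before the cycle of Theorem~\ref{1} is added, the resulting SGC has maximum degree at most~$5$.)

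Your cycle-cover route, by your own admission, is incomplete: the Kouider lemma gives cycles that may overlap (your $K_{2,4}$ example shows they must sometimes overlap), and once they do, the union $C_1\cup C_2\cup C_3$ can have many high-degree vertices and your edge-deletion bookkeeping no longer controls the number of branch vertices. You sketch no replacement argument for that case, so as written this path does not reach $s(G)\le\kappa(G)$.

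Your ``cleaner alternative'' is the right instinct, but the external input you describe --- ``$\alpha(G)\le\kappa(G)+k-1$ implies a spanning tree with at most $k$ leaves'' --- is not a result you have actually located; you leave its verification open. Tsugaki's theorem is precisely the concrete statement that fills this gap (indeed it is stronger, since it also bounds the maximum degree), and it is what the paper uses.
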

\begin{proof}
Define $V_3^T=\{v| v \in T, d_T(v)=3\}$ and let a $k-$tree be a tree with $\Delta(T) \le k$. We use the following result which appears in \cite{MR2506391}:
\begin{lem}\label{F}
\emph{(\cite{MR2506391})} If $\alpha(G) \le \kappa(G)+n+1$, where $0 \le n \le \kappa(G)$, then we have a spanning $3-$tree $T$ with $|V_3^T| \le n$.  
\end{lem}
By using Theorem \ref{F} for the case $n=\kappa(G)$ and Theorem \ref{1} we have an SGC. Since before adding the cycle $C$ mentioned in Theorem \ref{1}, the degree of each vertex was at most $3$, we get an SGC with maximum degree at most $5$.
\end{proof}
\newpage
\section*{Acknowledgements}
The authors greatly appreciate Prof. E.S. Mahmoodian for his corrections and many valuable comments. We also would like to acknowledge Prof. S. Akbari for introducing us this problem.

\section*{References}


\end{document}